\theoremstyle{plain}
 \newtheorem{thm}{\textbf{Theorem}}[section]
\theoremstyle{definition}
 \newtheorem{exm}{\textbf{Example}}[section]
 \newtheorem{dfn}{\textbf{Definition}}[section]
\theoremstyle{remark}
 \newtheorem{rem}{\textbf{Remark}}[section]
 \numberwithin{equation}{section}
\newtheorem*{theorem*}{\bf{Theorem \ref{M}}}
\renewcommand{\leq}{\leqslant}
\renewcommand{\geq}{\geqslant}
\title[De Branges-Rovnyak spaces with complete Nevanlinna-Pick property]{Which de Branges-Rovnyak spaces have complete Nevanlinna-Pick property?}
\subjclass[2010]{Primary 47B32 }
\author[Chu]{\bfseries Cheng Chu}
\address{
Department of Mathematics and Statistics\\ 
Laval University  \\ 
Quebec City, QC \\
Canada}
\email{chengchu813@gmail.com}
\begin{document}

\vspace{18mm}
\setcounter{page}{1}
\thispagestyle{empty}

\begin{abstract}
We characterize the de Branges-Rovnyak spaces with complete Nevanlinna-Pick property. Our method relies on the general theory of reproducing kernel Hilbert spaces.
\end{abstract}
\maketitle

\section{Introduction}

Let $X\subset \CC^d$. We say a function $K: X\times X\to\CC$ is a positive kernel on $X$ (written as $K	\succeq 0$) if it is self-adjoint ($K(x,y)=\ol{K(y,x)}$), and for all finite sets $\{\Gl_1,\Gl_2,\dots, \Gl_{n}\}\subset X$, the matrix $(K(\Gl_i,\Gl_j))_{i,j=1}^n$ is positive semi-definite, i.e., for all complex numbers $\Ga_1, \Ga_2, \dots, \Ga_n$,
$$
\sum_{i,j=1}^n \Ga_i\bar{\Ga_j}K(\Gl_i, \Gl_j)\geq 0.
$$

A reproducing kernel Hilbert space $\mathcal{H}$ on $X$ is a Hilbert space of complex valued functions on $X$ such that every point evaluation is a continuous linear functional. Thus for every $w\in X$, there exists an element $K_w\in\mathcal{H}$ such that for each $f\in\mathcal{H}$, $$\langle f, K_w\rangle_{\mathcal{H}} =f(w).$$
Since $K_w(z)=\langle K_w, K_z\rangle_{\mathcal{H}}$, $K$ can be regarded as a function on $X\times X$ and we write $K(z,w)= K_w(z)$. Such $K$ is a positive kernel and the Hilbert space $\mathcal{H}$ with reproducing kernel $K$ is denoted by $\mathcal{H}(K)$. There is a one-to-one correspondence between reproducing kernel Hilbert spaces and positive kernels (see e.g. \cite{ampi}*{Theorem 2.23}).

\begin{dfn}
Let $K: X\times X\to \CC$ be a positive kernel and $K(x ,y)$ never vanishes. We call $K$ a complete Nevanlinna-Pick kernel if for some $\Ga\in X$,
\beq\label{CNP}
1-\frac{K(z,\Ga)K(\Ga, w)}{K(\Ga, \Ga)K(x, y)}\succeq 0.
\eeq
\end{dfn}
In fact, if \eqref{CNP} holds for some $\Ga$, then it must hold for all $\Ga\in X$ \cite{mccult00}*{Theorem 3.1}. We say $\cH(K)$ has complete Nevanlinna-Pick property if $K$ is a complete Nevanlinna-Pick kernel.

Complete Nevanlinna-Pick kernels are related to Nevanlinna-Pick interpolation problem. The problem is to determine, given a finite set $\{x_1, . . . , x_n\}\subset X$, and complex numbers $\Gl_1,...,\Gl_n$, whether there exists a multiplier $f$ in $$Mult(\cH(K))=\{\Gvp: X\to\CC| f\in \cH(K) \Rightarrow  \Gvp f\in \cH(K)\}$$ of multiplier norm at most one that interpolates the data, i.e. satisfies $f(x_i) = \Gl_i$, for $i = 1,...,n$. A necessary condition to solve the Nevanlinna-Pick problem is that the $n\times n$ matrix
$$((1-\bar{\Gl_i}\Gl_j)K(x_i, x_j))_{i, j}$$
is positive semi-definite. A kernel $K$ is called a Pick kernel (or has pick property) if this necessary condition is also sufficient. Similarly one can do matrix interpolation. Given $s\times t$ matrices $W_1,...,W_n$, a necessary condition to solve the $s\times t$ Nevanlinna-Pick problem: $f(x_i)=W_i$, $i=1,...,n$ is that the matrix
$$
((I-W_i^*W_j)K(x_i, x_j))_{i, j}
$$
is positive semi-definite.
If this condition is also sufficient, then we say $K$ has $s\times t$-Pick property. A kernel is a complete Nevanlinna-Pick kernel if and only if it has $s\times t$-Pick property for all $s, t$ \cite{agmc_cnp}.

An important example of a complete Nevanlinna-Pick kernel is the Szeg\H{o} kernel
$$
k^S(z,w)=\frac{1}{1-\bw z}.
$$
It is the reproducing kernel of $H^2$, the Hardy space on the unit disk. Many properties of the Hardy space carry over to other spaces with complete Nevanlinna-Pick property, see \cite{ahmr17}, \cite{ahmr18}, \cite{ahmr19} for exmaples.

A natural question is to decide which reproducing kernel Hilbert spaces have complete Nevanlinna-Pick property. The following are known examples of Hilbert spaces with complete Nevanlinna-Pick property. Let $\DD$ and $\TT$ be the open unit disk and unit circle, respectively, in the complex plain.
\begin{enumerate}
\item Drury-Arverson spaces: Let $B_d$ be the open unit ball in $\CC^d$. The Drury-Arverson space $H^2_d$ is the space of analytic functions on $B_d$ with reproducing kernel $$k_d(z, w)=\frac{1}{1-\la z, w\ra_{\CC^d}}.$$
\item Weighted Hardy spaces \cite{qui93}: Let $w=(w_1,...,w_n,..)$ be a sequence of complex numbers such that $w_n^2\geq w_{n-1}w_{n+1}$, for all $n\geq 2$. Define a space of holomorphic functions $H_w^2$ on $\DD$ by the norm  $$||f||^2_{H^2_w}=\sum_{n=0}^\infty |\hat{f}(n)|^2w_n.$$
    In particular it contains the Hardy space $H^2$ and the Dirichlet space $\cD$.
\item Weighted Dirichlet spaces \cite{shi02}: Let $\mu$ be a finite positive measure on $\ol{\DD}$.
    Let $$U_{\mu}(w)=\int_{\TT}\frac{1-|w|^2}{|w-z|}d\mu(z)+\int_\DD \log \Big|\frac{1-\bw z}{w-z}\Big|^2 \frac{d\mu(z)}{1-|z|^2}.$$
    The weighted Dirichlet space $\cD(\mu)$ \cite{ric91}, \cite{ale93} is defined to be those holomorphic functions on $\DD$ with norm
    $$||f||^2_{\cD(\mu)}=||f||^2_{H^2}+\int_\DD |f^\prime(w)|^2U_{\mu}(w)dA(w).$$
\item Weighted Sobolev spaces \cite{qui93}: Let $w_o$ and $w_1$ be real, strictly positive, continuous functions on $[0, 1]$ and let $w_1\in C^1$. The weighted Sobolev space $W_1^2(w_0, w_1)$ is the space of absolutely continuous functions on $[0,1]$ with finite norm
    $$||f||^2_{W_1^2(w_0, w_1)}=\int_0^1|f(x)|^2w_0(x)dx+\int_0^1 |f^\prime(x)|^2w_1(x)dx.$$
\item Weighted Besov spaces: For $s\in \RR$, the weighted Besov space on the unit ball $\mathbb{B}_d$ of $\CC^d$ is defined by
    $$B_w^s=\{f\in Hol(\mathbb{B}_d): \int_{\mathbb{B}_d} |R^sf|^2 w dV< \infty   \},$$ where $R=\sum_{i=1}^d z^i\frac{\partial}{\partial z_i}$.
    If a weight $w$ satisfies that for some $\Ga > -1$ the ratio $w(z)/(1-|z|^2)^\Ga$ is nondecreasing for $0\leq r_0 < |z| < 1$, then $B_w^s$ has complete Nevanlinna-Pick property whenever $s\geq (\Ga + d)/2$ \cite{ahmr19b}.
\end{enumerate}

In this paper, we show another class of spaces having complete Nevanlinna-Pick property. Those spaces are called de Branges-Rovnyak spaces, introduced by de Branges and Rovnyak in \cite{deb-rov1}. The initial motivation of introducing de Branges-Rovnyak spaces was to provide canonical model spaces for certain types of contractions on Hilbert spaces, which played a fundamental role in de Branges's proof of the famous Bieberbach conjecture \cite{deb3}: If $\disp f(z) = 1+\sum_{n=2}^\infty a_nz^n$ is a univalent analytic function on $\DD$, then $|a_n|\leq n$, for every $n\geq 2$. Subsequently it was realized that these spaces have numerous connections with other topics in complex analysis and operator theory, in particular through Toeplitz operators \cite{sar94}. A recent two-volume monograph \cite{fri16-1}, \cite{fri16-2} presents most of the main developments in this area.
The main result is the following characterization of the de Branges-Rovnyak spaces with complete Nevanlinna-Pick property.
\begin{thm}\label{M}
Let $b$ be a nonconstant function in $H^\infty_1$. Then the de Branges-Rovnyak space $\cH(b)$ has complete Nevanlinna-Pick property if and only if there exists a holomorphic function $h$ such that
\begin{enumerate}
\item $h(b(z))=z$, for all $z\in \DD$.
\item The function $$\frac{z-b(0)}{h(z)}$$ extends to a holomorphic function on $\DD$ with
$$
\Big|\frac{z-b(0)}{h(z)}\Big|\leq |1-\ol{b(0)}z|\q\m{for all}\,z\in \DD.
$$
\end{enumerate}
\end{thm}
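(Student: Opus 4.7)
The plan is to combine two standard tools: the McCullough--Quiggin test, which reduces CNP of a normalized kernel $\tilde K$ to positive semi-definiteness of $1-1/\tilde K$, and the \emph{lurking isometry} technique, which in the necessity direction will produce $h$ together with the bound in condition~(2). I would first normalize and reduce to $b(0)=0$: setting $\tilde K(z,w):=k^b(z,w)k^b(0,0)/(k^b(z,0)k^b(0,w))$ and $\psi(z):=(b(z)-b(0))/(1-\overline{b(0)}b(z))$, a direct computation gives $\tilde K=k^\psi$, and the Möbius substitution $h\mapsto h\circ\phi_{b(0)}^{-1}$, with $\phi_\beta(z):=(z-\beta)/(1-\bar\beta z)$, converts condition~(2) for $b$ into condition~(2) for $\psi$ in the form ``$z/h(z)$ extends to a function in $H^\infty_1$''. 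Since CNP is invariant under this normalization, I may assume $b(0)=0$; McCullough--Quiggin then says CNP is equivalent to
\[
\frac{z\bar w-\overline{b(w)}b(z)}{1-\overline{b(w)}b(z)}\succeq 0 \quad\text{on }\DD\times\DD.
\]

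For sufficiency, assume $b$ is univalent with inverse $h$ on $b(\DD)$ and $\ell(z):=z/h(z)$ extends to a function in $H^\infty_1$. The change of variables $u=b(z),\,v=b(w)$ pulls the above kernel back to $(h(u)\overline{h(v)}-u\bar v)/(1-u\bar v)$ on $b(\DD)$, and using $h=u/\ell$ one obtains the factorization
\[
\frac{h(u)\overline{h(v)}-u\bar v}{1-u\bar v}=h(u)\overline{h(v)}\cdot\frac{1-\ell(u)\overline{\ell(v)}}{1-u\bar v}.
\]
The first factor is a rank-one positive kernel and the second is $k^\ell$, positive since $\ell\in H^\infty_1$; the Schur product of two positive kernels is positive.

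For necessity, I first show $b$ is univalent. If not, there exist $z_1\neq z_2$ with $b(z_1)=b(z_2)=\gamma$; a $2\times 2$ computation (for $\gamma\neq 0$) gives determinant $-|\gamma|^2|z_1-z_2|^2/(1-|\gamma|^2)^2<0$, while for $\gamma=0$ (so $b$ has a zero at some $z_*\neq 0$) a $3\times 3$ computation at $(0,z_*,z_3)$ with $b(z_3)\neq 0$ has a strictly negative principal $2\times 2$ minor. Either case contradicts positive semi-definiteness, so $b$ is univalent and $h=b^{-1}$ is holomorphic on $b(\DD)$. The Agler--McCarthy realization for CNP kernels then produces a holomorphic $F\colon\DD\to B_{\ell^2}$ with $F(0)=0$ and $\langle F(z),F(w)\rangle=1-1/k^b(z,w)$. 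A direct rearrangement yields the Gram-matrix identity
\[
\langle(b(z),F(z)),(b(w),F(w))\rangle_{\CC\oplus\ell^2}=\langle(z,b(z)F(z)),(w,b(w)F(w))\rangle_{\CC\oplus\ell^2},
\]
which by the standard lurking-isometry argument extends $(b(z),F(z))\mapsto(z,b(z)F(z))$ to a unitary $V=\begin{pmatrix}a & B\\ C & D\end{pmatrix}$ on $\CC\oplus\mathcal H$. Applying $V^*$ and solving for $F$ via the invertibility of $I-b(z)D^*$ on $\DD$ gives
\[
\frac{b(z)}{z}=\Lambda(b(z)),\qquad \Lambda(u):=\bar a+uC^*(I-uD^*)^{-1}B^*,
\]
and $\Lambda$, being the transfer function of the unitary $V^*$, belongs to $H^\infty_1$. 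Setting $\ell:=\Lambda$ gives precisely the extension of $z/h(z)$ from $b(\DD)$ to $\DD$ with $|\ell|\leq 1$, completing condition~(2).

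The main obstacle is the necessity direction: specifically, constructing the lurking isometry on a large enough auxiliary Hilbert space and identifying its transfer function with the required extension $\ell$. The normalization, the sufficiency factorization, and the univalence argument via small Pick matrices are largely routine computations.
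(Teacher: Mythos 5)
Your proposal is correct, but it proves the theorem by a genuinely different route than the paper. The paper never invokes realizations: it rewrites the CNP condition as a domination of kernels $K_1\preceq K_2+K_0$, identifies $\cH(K_1)$ and $\cH(K_2)$ concretely as pull-backs under $b$ of the range space $T_{1-\bar a z}H^2$ (using the inclusion, sum, and pull-back theorems for reproducing kernels), and thereby converts CNP into the statement that for every $f\in H^2$ there is $g\in H^2$ with $(1-\bar a b)f\circ b=z(1-\bar a b)g\circ b+f(a)(1-|a|^2)$ and $\|g\|^2+(1-|a|^2)|f(a)|^2\leq\|f\|^2$; injectivity of $b$ then comes from an identity-theorem argument applied to the $f=1$ case, and the bound in condition (2) from recognizing $\frac{z-a}{h(z)}\cdot\frac{1}{1-\bar a z}$ as a contractive multiplier of $H^2$. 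You instead normalize to $b(0)=0$ (your computation $\tilde K=k^\psi$ and the transport of conditions (1)--(2) under the M\"obius change are correct), prove sufficiency by the Schur-product factorization $\frac{h(u)\overline{h(v)}-u\bar v}{1-u\bar v}=h(u)\overline{h(v)}\,\frac{1-\ell(u)\overline{\ell(v)}}{1-u\bar v}$, prove univalence by explicit $2\times 2$ Pick-matrix determinants (your $\gamma=0$ case in fact already follows from the $2\times 2$ minor at $(z_*,z_3)$ alone, without the third point), and obtain the extension $\ell=\Lambda$ with $\|\Lambda\|_\infty\leq 1$ from the lurking isometry and the transfer function of the unitary colligation $V^*$. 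What each approach buys: yours gives an honest, self-contained proof of the ``if'' direction (which the paper dispatches with ``work backwards'') and produces the $H^\infty_1$ extension and its bound in one stroke from standard realization theory; the paper's argument is more elementary in its toolkit (only Aronszajn-type inclusion and pull-back theorems) and exhibits the multiplier structure of $\frac{z-a}{h(z)(1-\bar a z)}$ directly. I see no gap in your outline; the lurking-isometry step you flag as the main obstacle is the standard construction (extend the densely defined isometry to a unitary after enlarging the auxiliary space) and goes through exactly as you describe.
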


In Section 2, we introduce the definition of de Branges-Rovnyak spaces along with basic properties. In Section 3, we present the theory of general reproducing kernels, which played an center role in the proof of the main result. We prove Theorem \ref{M} in Section 4 and discuss some examples in Section 5.

\section{Preliminaries on de Brange-Rovnyak spaces}

Let $A$ be a bounded operator on a Hilbert space $H$. We define the range space $AH$, and endow it with the inner product
\beq\label{ran}\langle Af, Ag \rangle_{AH}=\langle f, g \rangle_{H},\qq f,g\in H \ominus \m{Ker}A.\eeq
The space $AH$ has a Hilbert space structure that makes $A$ a partial isometry on $H$. In particular, if $A$ is injective, then $$||Af||_{AH}=||f||_H.$$

We can view $H^2$ as a closed subspace of $L^2(\TT)$ of functions whose negative Fourier coefficients vanish.
The Toeplitz operator on the Hardy space $H^2$ with symbol $f$ in $L^\infty(\TT)$ is defined by
$$T_f (h) = P(fh),$$ for $h\in H^2$. Here $P$ is the orthogonal projection from $L^2(\TT)$ to $H^2$.
\begin{dfn}
Let $b$ be a function in $H^\infty_1$, the closed unit ball of $H^\infty$. The de Branges-Rovnyak space $\cH(b)$ is defined to be the range space
$$\cH(b):=(I-T_b T_{\bar b})^{1/2} H^2.$$
\end{dfn}

There are two special cases for $\cH(b)$ spaces. If $||b||_\infty<1$, then $\cH(b)$ is just a re-normed version of $H^2$. If $b$ is an inner function, then $$\cH(b)=H^2\ominus bH^2$$ is a closed subspace of $H^2$, the so-called model space (see \cite{garros} for a brief survey).

The theory of $\cH(b)$ spaces is pervaded by a fundamental dichotomy, when $b$ is an extreme point of $H^\infty_1$ and when it is not. The nonextreme case includes $||b||_\infty<1$ and the extreme case includes $b$ is an inner function.
The following theorem is shown in \cite{sar94}*{Chapter IV, V}.
\begin{thm}
Let $b\in H^\infty_1$. The following are equivalent:
\begin{enumerate}
\item $\cH(b)$ contains all the polynomials.
\item $b$ is not an extreme point of the unit ball of $H^\infty_1$.
\item $\log(1-|b|^2)\in L^1(\TT)$.
\end{enumerate}
\end{thm}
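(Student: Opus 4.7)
The plan is to establish the three equivalences via the cycle $(3)\Rightarrow(2)\Rightarrow(1)\Rightarrow(3)$, with $(2)\Leftrightarrow(3)$ being a classical extreme-point argument and both directions involving $(1)$ running through the \emph{Pythagorean mate} of $b$---the outer function $a\in H^\infty_1$ (when it exists) satisfying $|a|^2+|b|^2=1$ a.e.\ on $\TT$.

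For $(3)\Rightarrow(2)$, assuming $\log(1-|b|^2)\in L^1(\TT)$, Szeg\H{o}'s theorem produces an outer $a\in H^\infty_1$ with $|a|^2=1-|b|^2$ a.e.; then $b\pm\varepsilon a\in H^\infty_1$ for sufficiently small $\varepsilon>0$, so $b$ is not extreme. The converse $(2)\Rightarrow(3)$ proceeds by contrapositive: writing a non-extreme $b$ as $(b_1+b_2)/2$ with distinct $b_1,b_2\in H^\infty_1$ and setting $h=(b_1-b_2)/2$, the bounds $|b\pm h|\leq 1$ combine by the parallelogram identity to give $|h|^2\leq 1-|b|^2$ a.e.\ on $\TT$. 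Since $h\in H^\infty$ is nonzero, Jensen's inequality forces $\log|h|\in L^1(\TT)$, hence $\log(1-|b|^2)\in L^1(\TT)$.

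For $(3)\Rightarrow(1)$ I would invoke Sarason's description of $\cH(b)$ in the non-extreme case: the boundary identity $|a|^2+|b|^2=1$ lifts, via standard Toeplitz manipulations, to a factorization of $I-T_bT_{\bar b}$ identifying $\cH(b)$ with the set of $f\in H^2$ admitting some $f^+\in H^2$ with $T_{\bar b}f=T_{\bar a}f^+$, equipped with squared norm $\|f\|_{H^2}^2+\|f^+\|_{H^2}^2$. For any polynomial $p$, $T_{\bar b}p$ is itself a polynomial, and outerness of $a$ permits solving $T_{\bar a}f^+=T_{\bar b}p$ in $H^2$ (for example, via boundary division by $\bar a$ followed by Riesz projection); hence every polynomial lies in $\cH(b)$.

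For $(1)\Rightarrow(3)$ I would again contrapose: assuming $\log(1-|b|^2)\notin L^1(\TT)$, it suffices to exhibit a single polynomial outside $\cH(b)$, and the constant function $1$ will do. The membership $1\in\cH(b)$ is equivalent, via the range-space definition, to $\langle(I-T_bT_{\bar b})^{-1}\cdot 1,1\rangle_{H^2}<\infty$, which upon spectral expansion translates into the existence of a nontrivial outer $g\in H^2$ controlling $(1-|b|^2)^{-1}$ in a weighted boundary sense; by Szeg\H{o}'s theorem such a $g$ exists only when $\log(1-|b|^2)\in L^1(\TT)$. This last step---converting the abstract operator-theoretic membership $1\in\cH(b)$ into concrete boundary integrability of $\log(1-|b|^2)$---is the main obstacle, and the cleanest route runs through a careful reconstruction of the Pythagorean mate from the spectral resolution of $I-T_bT_{\bar b}$, rather than through any purely algebraic shortcut.
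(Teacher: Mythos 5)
First, note that the paper does not prove this theorem at all: it is imported verbatim from Sarason's book \cite{sar94} (Chapters IV and V), so your argument can only be judged on its own and against the classical proofs. Two of your four implications are in good shape. The argument for $(2)\Rightarrow(3)$ (parallelogram law giving $|h|^2\le 1-|b|^2$ a.e.\ on $\TT$, then integrability of $\log|h|$ for a nonzero $H^\infty$ function) is correct and is the standard one, though it is a direct proof rather than a contrapositive as you announce. The step $(3)\Rightarrow(1)$ is also essentially right once one grants Sarason's description of $\cH(b)$ in the nonextreme case via the Pythagorean mate $a$; only note that ``boundary division by $\bar a$'' is not how one solves $T_{\bar a}f^+=T_{\bar b}p$ (the quotient need not lie in $H^2$) --- the clean observation is that $T_{\bar a}$ maps the polynomials of degree at most $n$ into themselves upper-triangularly with diagonal entries $\overline{a(0)}\neq 0$, outerness of $a$ guaranteeing $a(0)\neq 0$, so it is invertible on each such finite-dimensional space.

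There are two genuine gaps. First, in $(3)\Rightarrow(2)$ the perturbation $b\pm\varepsilon a$ with $|a|^2=1-|b|^2$ need not stay in $H^\infty_1$: at a boundary point where $|b|=1-\delta$ one has $|a|\approx\sqrt{2\delta}$, so $|b|+\varepsilon|a|\approx 1-\delta+\varepsilon\sqrt{2\delta}>1$ as soon as $\delta<2\varepsilon^2$, and a nonextreme $b$ can perfectly well have $\operatorname{ess\,sup}_{\TT}|b|=1$ (e.g.\ $b$ outer with $1-|b(e^{i\theta})|=\min(|\theta|,1/2)$); one can check the phases of $b$ and $a$ do not conspire to save the inequality, so no single $\varepsilon$ works. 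The repair is standard: perturb instead by the outer function $g$ with $|g|=1-|b|$ on $\TT$ (or by $a^2$, since $|a|^2\le 2(1-|b|)$), for which $|b\pm\tfrac12 g|\le|b|+\tfrac12(1-|b|)\le 1$ pointwise. Second, and more seriously, $(1)\Rightarrow(3)$ is not a proof but a statement of intent: you reduce to showing $1\notin\cH(b)$ when $\log(1-|b|^2)\notin L^1(\TT)$ and then concede that converting the range-space membership criterion into boundary integrability ``is the main obstacle.'' It is. The classical route uses the criterion that $1\in\cH(b)$ iff $\sup_{g\in H^2}|g(0)|^2/\langle(I-T_bT_{\bar b})g,g\rangle<\infty$, together with the identity $\langle(I-T_bT_{\bar b})g,g\rangle=\int_{\TT}(1-|b|^2)|g|^2\,dm+\|(I-P)(\bar bg)\|_{L^2}^2$ and Szeg\H{o}'s infimum theorem; the first summand can be driven to $0$ over $g(0)=1$ precisely when $\log(1-|b|^2)\notin L^1(\TT)$, but controlling the second summand along the minimizing sequence requires genuine additional work that your sketch does not supply. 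As written, the cycle of implications is not closed.
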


If $b$ is a constant, then $\cH(b)$ is a rescaling of $H^2$ and has complete Nevanlinna-Pick property. Some nontrivial examples are known.
The following theorem shows that certain nonextreme $\cH(b)$ spaces are equal to a weighted Dirichlet space, which has complete Nevanlinna-Pick property.
\begin{thm}\cite{cgr}*{Theorem 3.1}
Let
$$
b(z)=\frac{\sqrt{\tau}\Ga\bar{\Gl} z}{1-\tau\bar{\Gl} z},
$$
where $\Gl\in \TT, \Ga\in\CC, \tau\in (0,1]$ and $\tau+1/\tau=2+|\Ga|^2.$
Then $\cH(b)$ coincides with a weighted Dirichlet space $\cD(\mu)$
\end{thm}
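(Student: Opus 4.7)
The plan is to identify an explicit positive measure $\mu$ and prove $\cH(b)=\cD(\mu)$ as Hilbert spaces by matching reproducing kernels (or, equivalently, by comparing norms on polynomials). The constraint $\tau+1/\tau=2+|\alpha|^2$ yields $|\alpha|^2=(1-\tau)^2/\tau$, so $|\sqrt\tau\,\alpha|=1-\tau$. A rotation $z\mapsto \xi z$ induces a unitary isomorphism on $\cH(b)$ (and transports $\cD(\mu)$ to $\cD(\rho_*\mu)$), so we may absorb the phase of $\alpha$ and reduce to $b(z)=(1-\tau)\overline\lambda z/(1-\tau\overline\lambda z)$. Then
$$1-|b(z)|^2=\frac{2\tau(1-\operatorname{Re}(\overline\lambda z))}{|1-\tau\overline\lambda z|^2}\qquad(z\in\TT),$$
so $b$ is nonextreme, $\cH(b)$ contains polynomials densely, and the Sarason outer function with $|a|^2+|b|^2=1$ on $\TT$ and $a(0)>0$ is
$$a(z)=\frac{\sqrt\tau(1-\overline\lambda z)}{1-\tau\overline\lambda z}.$$

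The central identity is a decomposition of the reproducing kernel $k^b(z,w)=(1-\overline{b(w)}b(z))/(1-\overline{w}z)$. Using the factorization $(1-\overline\lambda z)(1-\lambda\overline w)=1-\overline\lambda z-\lambda\overline w+\overline{w}z$, one rewrites the numerator of $k^b$ as $(1-\tau)(1-\overline{w}z)+\tau(1-\overline\lambda z)(1-\lambda\overline w)$, so that
$$k^b(z,w)=\frac{1-\tau}{(1-\tau\lambda\overline{w})(1-\tau\overline\lambda z)}+\frac{a(z)\overline{a(w)}}{1-\overline{w}z}.$$
The second term is the reproducing kernel of $aH^2$ with the range norm $\|af\|=\|f\|_{H^2}$, and the first is the rank-one kernel of $\CC\cdot u$ where $u(z)=1/(1-\tau\overline\lambda z)$, with $\|u\|^2_{\cH(b)}=1/(1-\tau)$.

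The candidate measure is $\mu=|\alpha|^2\,\delta_\lambda$. By Richter-Sundberg, every $f\in\cD(\mu)$ admits a unique decomposition $f=f(\lambda)+(z-\lambda)g'$ with $g'\in H^2$ and $\|f\|^2_{\cD(\mu)}=\|f\|^2_{H^2}+|\alpha|^2\|g'\|^2_{H^2}$, while each $f\in\cH(b)$ splits uniquely as $f=\gamma u+ag$ with $\gamma=(1-\tau)f(\lambda)$ (using $a(\lambda)=0$). Using the factorization $1-\overline\lambda z=-\overline\lambda(z-\lambda)$, one solves for $g$ in terms of $g'$, computes $\|g\|^2_{H^2}$ explicitly, and verifies the identity $|\gamma|^2/(1-\tau)+\|g\|^2_{H^2}=\|f\|^2_{H^2}+|\alpha|^2\|g'\|^2_{H^2}$. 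This is most cleanly checked on the monomials $f=z^n$, where a direct computation yields $\|z^n\|_{\cH(b)}^2=1+n(1-\tau)^2/\tau=\|z^n\|_{\cD(\mu)}^2$; density of polynomials in both spaces then completes the identification. The main obstacle is the algebraic verification of the kernel decomposition in the second paragraph; once that is in hand, the remainder reduces to a concrete calculation on monomials.
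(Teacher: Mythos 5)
First, note that the paper itself does not prove this statement: it is quoted verbatim from \cite{cgr}*{Theorem 3.1}, so there is no internal proof to compare against. Judged on its own terms, your outline follows what is essentially the standard route (kernel splitting plus the Richter--Sundberg local Dirichlet integral), and the computational core is correct: the constraint does give $|\sqrt{\tau}\,\alpha|=1-\tau$, the rotation reduction is legitimate since $\cH(cb)=\cH(b)$ isometrically for $|c|=1$, the function $a(z)=\sqrt{\tau}(1-\overline{\lambda}z)/(1-\tau\overline{\lambda}z)$ is indeed the outer Pythagorean mate, and I have checked that your kernel identity
$$
1-\overline{b(w)}b(z)=\frac{(1-\tau)(1-\overline{w}z)+\tau(1-\overline{\lambda}z)(1-\lambda\overline{w})}{(1-\tau\lambda\overline{w})(1-\tau\overline{\lambda}z)}
$$
holds, which yields the asserted decomposition $k^b=(1-\tau)u\otimes u+a\overline{a}\,k^S$ and, since $u\notin aH^2$, the direct-sum norm formula $\|f\|^2_{\cH(b)}=|\gamma|^2/(1-\tau)+\|g\|^2_{H^2}$.

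The genuine gap is in your last step. You propose to verify the norm identity ``most cleanly'' on the monomials and then invoke density of polynomials. But $\|\cdot\|_{\cH(b)}$ and $\|\cdot\|_{\cD(\mu)}$ are quadratic forms, and the monomials are \emph{not} orthogonal in either space when $\mu=|\alpha|^2\delta_{\lambda}$: for $n>m\geq 1$ one finds $\langle z^n,z^m\rangle_{\cD(\mu)}=|\alpha|^2 m\lambda^{n-m}\neq 0$. Agreement of two inner products on the diagonal of a non-orthogonal spanning set does not force the inner products to agree (two distinct inner products on $\CC^2$ can agree on $e_1$ and $e_2$ but differ on $e_1+e_2$), so the computation $\|z^n\|^2_{\cH(b)}=1+n(1-\tau)^2/\tau=\|z^n\|^2_{\cD(\mu)}$ plus density does not complete the identification. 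You must either carry out the general-$f$ verification you allude to (substituting $f=f(\lambda)+(z-\lambda)g'$ into $g=\bigl((1-\tau\overline{\lambda}z)f-(1-\tau)f(\lambda)\bigr)/a$ gives $g=\sqrt{\tau}f(\lambda)-\tau^{-1/2}\lambda(1-\tau\overline{\lambda}z)g'$, after which the identity follows), or match the full Gram matrices $\langle z^n,z^m\rangle$ in both spaces; both routes do work (I checked the off-diagonal entries agree: each equals $m(1-\tau)^2\tau^{-1}\lambda^{n-m}$ for $n>m$). A smaller point worth justifying: the claim $\gamma=(1-\tau)f(\lambda)$ requires knowing that $ag$ has nontangential limit $0$ at $\lambda$ for every $g\in H^2$; this is true because $|a(z)|\lesssim|z-\lambda|$ while $|g(z)|=O((1-|z|)^{-1/2})$, but it is not automatic from the algebraic decomposition alone.
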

We will show in Section 5 that extreme $\cH(b)$ spaces may also have complete Nevanlinna-Pick property.

\section{Reproducing kernel Hilbert spaces}\label{Pre}
In this section, we present some basic theory of reproducing kernel Hilbert spaces. For more information about reproducing kernels and their associated Hilbert spaces, see \cite{aro50}, \cite{pau16}.

The space $\cH(b)$ has reproducing kernel \cite{sar94}*{II-3}
\beq\label{kb}
k^b(z,w)=\frac{1-\ol{b(w)}b(z)}{1-\bw z}.
\eeq
In general, let $A$ be a bounded linear operator on $\cH(K)$. The range space $A \cH(K)$, defined in \eqref{ran}, is also a reproducing kernel Hilbert space with kernel function $K^{A\cH(K)}_w=AA^*K_w$.

For two positive kernels $K_1, K_2$, we write $$K_1	\preceq K_2$$ to mean that $$\label{n} K_2-K_1\succeq 0.$$
\begin{thm}\label{in}\cite{pau16}*{Theorem 5.1}
Let $\mathcal{H}(K_1)$ and $\mathcal{H}(K_2)$ be reproducing kernel Hilbert spaces on $X$. Then $$\mathcal{H}(K_1) \subset \mathcal{H}(K_2)$$ if and only if there is some constant $\Gd>0$ such that
$$
K_1\preceq \Gd^2 K_2.
$$
Moreover, $||f||_{\cH(K_2)}\leq \Gd||f||_{\cH(K_1)}$, for all $f\in \cH(K_1)$.
\end{thm}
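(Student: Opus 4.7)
My plan is to prove both directions by exploiting the correspondence between bounded operators on $\cH(K_i)$ and their action on reproducing kernels, since the reproducing property converts norm estimates into pointwise information and vice versa. The observation I want to use throughout is that for any scalars $\Ga_i$ and points $\Gl_i\in X$,
$$
\Big\|\sum_i \Ga_i K^{(j)}_{\Gl_i}\Big\|^2_{\cH(K_j)}=\sum_{i,k}\Ga_i\ol{\Ga_k}K_j(\Gl_k,\Gl_i),\qq j=1,2,
$$
so that the kernel inequality $K_1\preceq \Gd^2 K_2$ is literally the statement that the map $K^{(2)}_w\mapsto K^{(1)}_w$ extends to a contraction (up to the factor $\Gd$) between the dense subspaces spanned by kernel functions.

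For the ``if'' direction, assume $K_1\preceq \Gd^2 K_2$. The identity above shows that the linear map $T$ defined on the dense span of $\{K^{(2)}_w:w\in X\}\subset\cH(K_2)$ by $T\sum_i\Ga_i K^{(2)}_{\Gl_i}:=\sum_i\Ga_i K^{(1)}_{\Gl_i}$ is well defined and satisfies $\|T\cdot\|_{\cH(K_1)}\le \Gd\|\cdot\|_{\cH(K_2)}$; it therefore extends to a bounded operator $T:\cH(K_2)\to\cH(K_1)$ of norm at most $\Gd$. For any $f\in\cH(K_1)$ and $w\in X$, the reproducing property yields
$$
\langle T^*f,K^{(2)}_w\rangle_{\cH(K_2)}=\langle f,TK^{(2)}_w\rangle_{\cH(K_1)}=\langle f,K^{(1)}_w\rangle_{\cH(K_1)}=f(w),
$$
so $T^*f\in\cH(K_2)$ is the same function as $f$. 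Hence $\cH(K_1)\subset\cH(K_2)$ and $\|f\|_{\cH(K_2)}=\|T^*f\|_{\cH(K_2)}\le\Gd\|f\|_{\cH(K_1)}$.

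For the ``only if'' direction, assume $\cH(K_1)\subset\cH(K_2)$. First I would invoke the closed graph theorem to show the inclusion $\iota:\cH(K_1)\to\cH(K_2)$ is bounded: both norms dominate pointwise evaluation through their own kernels, so if $f_n\to f$ in $\cH(K_1)$ and $\iota f_n\to g$ in $\cH(K_2)$, then $f(w)=\lim f_n(w)=g(w)$ for every $w$, i.e.\ $\iota f=g$. Setting $\Gd:=\|\iota\|$, the same reproducing property computation as above identifies $\iota^* K^{(2)}_w=K^{(1)}_w$ inside $\cH(K_1)$. Applying $\|\iota^*\|\le\Gd$ to an arbitrary combination $v=\sum_j\ol{\Ga_j}K^{(2)}_{\Gl_j}$ and unfolding $\|v\|^2$ and $\|\iota^*v\|^2$ using the displayed identity gives
$$
\sum_{i,j}\Ga_i\ol{\Ga_j}K_1(\Gl_i,\Gl_j)\le\Gd^2\sum_{i,j}\Ga_i\ol{\Ga_j}K_2(\Gl_i,\Gl_j),
$$
which is exactly $K_1\preceq\Gd^2 K_2$.

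I expect no deep obstacle here; the only care required is the bookkeeping that turns a norm inequality on span of kernels into a positive‑semidefiniteness statement for the quadratic form defining $K_1\preceq\Gd^2K_2$, and the routine verification that linear combinations of kernel functions are norm‑dense (immediate, since their orthogonal complement consists of functions vanishing everywhere). The argument is symmetric between the two directions, with the closed graph theorem being the only nontrivial input used in the ``only if'' half.
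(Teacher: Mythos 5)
The paper does not prove this statement at all: it is quoted verbatim from the cited source (Paulsen--Raghupathi, Theorem 5.1), so there is no in-paper argument to compare against. Your proof is correct and complete as written. Both directions are sound: the identity $\bigl\|\sum_i\alpha_iK_{\lambda_i}\bigr\|^2=\sum_{i,k}\alpha_i\overline{\alpha_k}K(\lambda_k,\lambda_i)$ does convert $K_1\preceq\delta^2K_2$ into the statement that $K^{(2)}_w\mapsto K^{(1)}_w$ is bounded by $\delta$ on the dense span (well-definedness included, since a combination vanishing in $\mathcal{H}(K_2)$ forces the corresponding $\mathcal{H}(K_1)$-norm to be $0$), and the computation $\langle T^*f,K^{(2)}_w\rangle=f(w)$ correctly identifies $T^*f$ with $f$ as a function, giving both the inclusion and the norm bound with the same $\delta$ appearing in the kernel inequality, as the "moreover" clause requires. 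The closed-graph argument in the converse is the standard one and is valid because point evaluations are continuous in both spaces. One remark on alternatives: the backward implication can also be obtained without introducing $T$ and $T^*$, by writing $\delta^2K_2=K_1+(\delta^2K_2-K_1)$ and invoking the sum-of-kernels theorem (the paper's Theorem 3.2), which immediately gives $\mathcal{H}(K_1)\subset\mathcal{H}(\delta^2K_2)=\mathcal{H}(K_2)$ with $\|f\|_{\mathcal{H}(K_2)}\le\delta\|f\|_{\mathcal{H}(K_1)}$; your operator-theoretic route is self-contained and avoids relying on that decomposition, at the cost of the (routine) density and adjoint bookkeeping you carried out.
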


It is easy to check the sum of two positive kernels is still a positive kernel, and the corresponding reproducing kernel Hilbert space is described below.
\begin{thm}\label{+}\cite{pau16}*{Section 5.2}
Let $\mathcal{H}(K_1)$ and $\mathcal{H}(K_2)$ be reproducing kernel Hilbert spaces on $X$ and let $K=K_1+K_2$. Then
$$\cH(K)=\{f=f_1+f_2: f_1\in \cH(K_1), f_2\in \cH(K_2)\}.
$$
For every $f\in \cH(K)$,
$$
||f||^2_{\cH(K)}=\m{min}\{||f_1||^2_{\cH(K_1)}+||f_2||^2_{\cH(K_2)}: f=f_1+f_2: f_1\in \cH(K_1), f_2\in \cH(K_2)  \}.
$$
Moreover, if $\cH(K_1)\cap\cH(K_2)=\{0\}$, then the sum is direct, i.e.
$$
||f_1+f_2||^2_{\cH(K)}=||f_1||^2_{\cH(K_1)}+||f_2||^2_{\cH(K_2)}.
$$
\end{thm}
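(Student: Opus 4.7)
The plan is to realize $\cH(K)$ as the range of a natural ``sum'' operator on the external direct sum, using the range-space construction \eqref{ran}. Form $\mathcal{M}:=\cH(K_1)\oplus\cH(K_2)$ with the Hilbert norm $\|(f_1,f_2)\|^2_{\mathcal{M}}=\|f_1\|^2_{\cH(K_1)}+\|f_2\|^2_{\cH(K_2)}$, and define the sum map
$$
\Phi:\mathcal{M}\longrightarrow \CC^X,\qquad \Phi(f_1,f_2)(x)=f_1(x)+f_2(x).
$$
Equip $\mathcal{R}:=\Phi(\mathcal{M})$ with the range inner product of \eqref{ran}, so that $\|\Phi(\xi)\|_{\mathcal{R}}=\|P\xi\|_{\mathcal{M}}$, where $P$ is the orthogonal projection of $\mathcal{M}$ onto $(\ker\Phi)^\perp$. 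Since $P\xi$ is the smallest-norm preimage of $\Phi(\xi)$, this reads
$$
\|g\|^2_{\mathcal{R}}=\min\bigl\{\|f_1\|^2_{\cH(K_1)}+\|f_2\|^2_{\cH(K_2)}:g=f_1+f_2\bigr\},
$$
with the minimum attained.

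Next I would identify $\mathcal{R}=\cH(K)$ by pinning down its reproducing kernel. Fix $w\in X$ and set $\eta_w:=(K_1(\cdot,w),K_2(\cdot,w))\in\mathcal{M}$. The reproducing properties in the two summands give $\langle \xi,\eta_w\rangle_{\mathcal{M}}=\xi_1(w)+\xi_2(w)=\Phi(\xi)(w)$ for every $\xi=(\xi_1,\xi_2)\in\mathcal{M}$. Specializing this to $\xi\in\ker\Phi$ forces $\langle\xi,\eta_w\rangle_{\mathcal{M}}=0$, so $\eta_w\in(\ker\Phi)^\perp$ and $P\eta_w=\eta_w$. Hence, for every $g=\Phi(\xi)\in\mathcal{R}$,
$$
\langle g,\Phi(\eta_w)\rangle_{\mathcal{R}}=\langle P\xi,P\eta_w\rangle_{\mathcal{M}}=\langle P\xi,\eta_w\rangle_{\mathcal{M}}=\langle \xi,\eta_w\rangle_{\mathcal{M}}=g(w),
$$
the third equality using $(I-P)\xi\in\ker\Phi\perp\eta_w$. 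Thus the reproducing kernel of $\mathcal{R}$ at $w$ is $\Phi(\eta_w)(z)=K_1(z,w)+K_2(z,w)=K(z,w)$. The one-to-one correspondence between positive kernels and reproducing kernel Hilbert spaces then forces $\mathcal{R}=\cH(K)$ isometrically, yielding the set equality and the minimum-norm formula at once.

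For the last assertion, assume $\cH(K_1)\cap\cH(K_2)=\{0\}$. Any $(f_1,f_2)\in\ker\Phi$ satisfies $f_1=-f_2\in\cH(K_1)\cap\cH(K_2)$, so $\ker\Phi=\{0\}$ and $\Phi$ is an isometric isomorphism from $\mathcal{M}$ onto $\cH(K)$. Consequently $\|f_1+f_2\|^2_{\cH(K)}=\|(f_1,f_2)\|^2_{\mathcal{M}}=\|f_1\|^2_{\cH(K_1)}+\|f_2\|^2_{\cH(K_2)}$.

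The one delicate point is the kernel identification in the middle step: the reproducing element in a range space is a priori only determined up to the kernel of $\Phi$, so one must check that $\eta_w$ itself lies in $(\ker\Phi)^\perp$, not just some projection of it. This orthogonality is forced by the reproducing property on each summand, and once it is in place the uniqueness of the reproducing kernel carries the proof through routinely.
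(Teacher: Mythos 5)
Your argument is correct; the paper itself states this result without proof, citing \cite{pau16}*{Section 5.2}, and your range-space/pull-back argument via the sum map $\Phi$ on the external direct sum --- including the key verification that $\eta_w=(K_1(\cdot,w),K_2(\cdot,w))$ lies in $(\ker\Phi)^\perp$ so that $\Phi(\eta_w)=K(\cdot,w)$ is genuinely the reproducing element --- is essentially the standard proof given there. The only detail left implicit is that $\ker\Phi$ is closed in $\mathcal{M}$ (so that $P$ exists), which is immediate since point evaluations are continuous on each summand.
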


Let $\Gvp: X\to S$ be a function. For a positive kernel $K: X\times X\to\CC$, we write $K\circ\Gvp$ to be the function given by $(K\circ\Gvp)(z,w)=K(\Gvp(z),\Gvp(w))$. Then $K\circ\Gvp$ is also a positive kernel and we have the following pull-back theorem.
\begin{thm}\label{comp}\cite{pau16}*{Theorem 5.7}
Let $\Gvp: S\to X$ be a function and let $K: X\times X\to\CC$ be a positive kernel. Then
$$
\cH(K\circ\Gvp)=\{f\circ \Gvp: f\in \cH(K)\}.
$$
And for $u\in \cH(K\circ\Gvp)$,
$$
||u||_{\cH(K\circ\Gvp)}=\m{min} \{||f||_{\cH(K)}: u=f\circ\Gvp   \}.
$$
\end{thm}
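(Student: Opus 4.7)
The plan is to identify the range of the composition operator $C_\Gvp\colon\cH(K)\to\CC^S$, $C_\Gvp f=f\circ\Gvp$, equipped with the range-space inner product of equation \eqref{ran}, as a reproducing kernel Hilbert space on $S$ whose kernel is precisely $K\circ\Gvp$. The one-to-one correspondence between positive kernels and their RKHS's stated in Section 1 then forces this range to equal $\cH(K\circ\Gvp)$, yielding both the set-theoretic identity and the norm formula.

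Carrying this out, I would first note that $\mathcal{N}:=\m{Ker}\,C_\Gvp$ is a closed subspace of $\cH(K)$, because norm convergence in $\cH(K)$ implies pointwise convergence on $X$, hence on $\Gvp(S)$. Decomposing $\cH(K)=\mathcal{N}\oplus\mathcal{N}^\perp$, the restriction $C_\Gvp|_{\mathcal{N}^\perp}$ is injective onto $\m{ran}\,C_\Gvp$, and \eqref{ran} gives
$$\langle C_\Gvp f,C_\Gvp g\rangle_{\m{ran}\,C_\Gvp}=\langle f,g\rangle_{\cH(K)},\qq f,g\in\mathcal{N}^\perp.$$
Next I would compute, for each $s\in S$,
$$C_\Gvp K_{\Gvp(s)}(t)=K_{\Gvp(s)}(\Gvp(t))=K(\Gvp(t),\Gvp(s))=(K\circ\Gvp)(t,s),$$
which shows $(K\circ\Gvp)(\cdot,s)\in\m{ran}\,C_\Gvp$. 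The critical observation is that $K_{\Gvp(s)}\in\mathcal{N}^\perp$: if $g\in\mathcal{N}$ then $\langle g,K_{\Gvp(s)}\rangle_{\cH(K)}=g(\Gvp(s))=0$. Combining these, for $u=C_\Gvp f$ with $f\in\mathcal{N}^\perp$,
$$\langle u,(K\circ\Gvp)(\cdot,s)\rangle_{\m{ran}\,C_\Gvp}=\langle C_\Gvp f,C_\Gvp K_{\Gvp(s)}\rangle=\langle f,K_{\Gvp(s)}\rangle_{\cH(K)}=f(\Gvp(s))=u(s),$$
proving that $K\circ\Gvp$ reproduces point evaluation; uniqueness of the RKHS then gives $\m{ran}\,C_\Gvp=\cH(K\circ\Gvp)$.

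For the minimum-norm statement, any $f\in\cH(K)$ with $C_\Gvp f=u$ decomposes as $f=f_1+f_2$ with $f_1\in\mathcal{N}^\perp$, $f_2\in\mathcal{N}$, so $C_\Gvp f_1=u$ and $\|u\|_{\cH(K\circ\Gvp)}=\|f_1\|_{\cH(K)}$, while $\|f\|_{\cH(K)}^2=\|f_1\|_{\cH(K)}^2+\|f_2\|_{\cH(K)}^2$. Thus $\|f\|_{\cH(K)}\ge\|u\|_{\cH(K\circ\Gvp)}$ with equality exactly when $f_2=0$, so the infimum is attained at $f_1$ and is a minimum. The only delicate step is the verification that $K_{\Gvp(s)}\in\mathcal{N}^\perp$; without this fact the range-space inner product would fail to compute evaluation at points of $S$, so everything hinges on it. Beyond that, the argument is a routine orthogonal-projection calculation.
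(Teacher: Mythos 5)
Your argument is correct. Note, however, that the paper does not prove this statement at all: it is quoted verbatim from Paulsen--Raghupathi (Theorem 5.7 of \cite{pau16}), so there is no in-paper proof to compare against. What you give is the standard pull-back argument --- transport the Hilbert space structure of $\mathcal{N}^\perp$ to $\m{ran}\,C_\Gvp$ via \eqref{ran}, check that $K_{\Gvp(s)}\in\mathcal{N}^\perp$ so that $(K\circ\Gvp)(\cdot,s)$ reproduces evaluation at $s$, and invoke uniqueness of the RKHS associated to a kernel --- and you correctly isolate the one point that needs care (closedness of $\ker C_\Gvp$, which follows from continuity of point evaluations, and the membership $K_{\Gvp(s)}\in\mathcal{N}^\perp$). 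The minimum-norm formula then falls out of the orthogonal decomposition exactly as you say, so the proof is complete and self-contained.
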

We shall also use $\Gvp^*(\cH(K))$ to denote $\cH(K\circ\Gvp)$.

\begin{rem}\label{A}
If $\cH(K)$ is a space of analytic functions and $\Gvp$ is an analytic map, then we simply have
$$
||f\circ\Gvp||_{\cH(K\circ\Gvp)}=||f||_{\cH(k)}.
$$
\end{rem}

\section{Proof of Theorem \ref{M}}
For convenience, we restate Theorem \ref{M} here.
\begin{theorem*}
Let $b$ be a nonconstant function in $H^\infty_1$. Then the de Branges-Rovnyak space $\cH(b)$ has complete Nevanlinna-Pick property if and only if there exists a holomorphic function $h$ such that
\begin{enumerate}
\item $h(b(z))=z$, for all $z\in \DD$.
\item The function $$\frac{z-b(0)}{h(z)}$$ extends to a holomorphic function on $\DD$ with
$$
\Big|\frac{z-b(0)}{h(z)}\Big|\leq |1-\ol{b(0)}z|\q\m{for all}\,z\in \DD.
$$
\end{enumerate}
\end{theorem*}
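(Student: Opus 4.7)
The plan is to normalize $b$ at the origin and then use the Agler--McCarthy lurking-isometry/transfer-function machinery for complete Nevanlinna--Pick kernels. First I would set $c=b(0)$ (so $|c|<1$ since $b$ is nonconstant) and $B=\phi_c\circ b$, where $\phi_c(z)=(z-c)/(1-\bar c z)$; then $B\in H^\infty_1$ and $B(0)=0$. Using the standard Möbius identity
\[
1-\overline{\phi_c(w)}\,\phi_c(z) = \frac{(1-|c|^2)(1-\bar w z)}{(1-\bar c z)(1-c\bar w)},
\]
a direct calculation shows that dividing $k^b(z,w)$ by the normalization factor $k^b(z,0)\,k^b(0,w)/k^b(0,0)$ yields exactly $k^B(z,w)=(1-\overline{B(w)}B(z))/(1-\bar w z)$. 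Since the CNP condition is invariant under such positive rescalings, $\cH(b)$ is CNP iff $\cH(B)$ is CNP. In parallel, writing $h(z)=(z-c)/\bigl((1-\bar c z)\tilde\Psi(z)\bigr)$ and $\Psi=\tilde\Psi\circ\phi_{-c}$, one checks that the existence of $h$ satisfying (1)--(2) is equivalent to the existence of a Schur-class function $\Psi\in H^\infty_1$ with $B(z)=z\,\Psi(B(z))$ for all $z\in\DD$. The remaining task is therefore to prove that $\cH(B)$ is CNP iff such a $\Psi$ exists.

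For the sufficient direction, given such a $\Psi$ one has $\overline{B(w)}B(z)=\bar w z\,\Psi(B(z))\overline{\Psi(B(w))}$, and the CNP deficit of $k^B$ at $\Gamma=0$ collapses to
\[
\frac{\bar w z-\overline{B(w)}B(z)}{1-\overline{B(w)}B(z)} = \bar w z\cdot\frac{1-\Psi(B(z))\overline{\Psi(B(w))}}{1-\overline{B(w)}B(z)} = \bar w z\cdot k^{\Psi}(B(z),B(w)).
\]
The right side is the Schur product of the positive kernel $\bar w z$ with the pull-back $k^{\Psi}\circ B$, which is positive by Theorem \ref{comp}; hence $k^B$, and therefore $k^b$, is CNP.

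The necessary direction is where I expect the real work to lie. Suppose the kernel $P(z,w):=(\bar w z-\overline{B(w)}B(z))/(1-\overline{B(w)}B(z))$ is positive. Since $P(z,0)=P(0,w)=0$ and the positive kernel $\bar w z$ has no zeros on $\DD\setminus\{0\}$, dividing (then extending continuously across the origin) yields a positive kernel
\[
N(z,w) = \frac{1-g_B(z)\overline{g_B(w)}}{1-\overline{B(w)}B(z)}, \qquad g_B(z):=B(z)/z,
\]
on $\DD$. A holomorphic Kolmogorov decomposition then gives a Hilbert space $\mathcal{K}$ and a holomorphic $\Phi:\DD\to\mathcal{K}$ with $N(z,w)=\langle\Phi(z),\Phi(w)\rangle$. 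Rearranging $N(z,w)(1-\overline{B(w)}B(z))=1-g_B(z)\overline{g_B(w)}$ produces the Gram equality
\[
g_B(z)\overline{g_B(w)}+\langle\Phi(z),\Phi(w)\rangle = 1+\langle B(z)\Phi(z),B(w)\Phi(w)\rangle,
\]
which says that the assignment $(1,B(z)\Phi(z))\mapsto(g_B(z),\Phi(z))$ extends to an isometry between spanned subspaces of $\CC\oplus\mathcal{K}$. Extending this isometry to a contraction $\bigl(\begin{smallmatrix}a&C\\ D&E\end{smallmatrix}\bigr)$ on $\CC\oplus\mathcal{K}$ (enlarging $\mathcal{K}$ if necessary), the component equations give $\Phi(z)=(I-B(z)E)^{-1}D$ and $g_B(z)=a+B(z)\,C(I-B(z)E)^{-1}D$. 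The transfer function $\Psi(\zeta):=a+\zeta\,C(I-\zeta E)^{-1}D$ then lies in $H^\infty_1$ by the standard contractive-colligation realization theorem, and by construction $g_B=\Psi\circ B$, i.e.\ $B(z)=z\Psi(B(z))$, which translates back through $\phi_c$ to the desired $h$. The main hurdles are (i) a clean continuous extension of $N$ across the axes $z=0$ and $w=0$, (ii) arranging the Kolmogorov factor $\Phi$ to be holomorphic, and (iii) invoking the realization theorem to guarantee $\Psi\in H^\infty_1$ on all of $\DD$.
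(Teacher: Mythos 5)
Your proposal is correct in substance, but it follows a genuinely different route from the paper. The paper never normalizes the kernel and never invokes a realization theorem: it writes the CNP condition as the kernel inequality $K_1\preceq K_2+K_0$ with $K_1=\frac{\overline{F(w)}F(z)}{1-\overline{b(w)}b(z)}$, $K_2=\bar{w}zK_1$, $K_0=1-|a|^2$ (where $a=b(0)$ and $F=k^b(\cdot,0)$), identifies $\cH(K_1)$ and $\cH(K_2)$ explicitly as pull-backs under $b$ of the range spaces $T_{1-\bar{a}z}H^2$ and $T_zT_{1-\bar{a}z}H^2$ using the RKHS toolbox of Section 3 (containment, sums of kernels, composition), and thereby converts the inequality into a concrete functional equation: for every $f\in H^2$ there is $g\in H^2$ with
\begin{equation*}
(1-\bar{a}b(z))f(b(z))=z(1-\bar{a}b(z))g(b(z))+f(a)(1-|a|^2),\qquad \|g\|_{H^2}^2+(1-|a|^2)|f(a)|^2\leq\|f\|_{H^2}^2.
\end{equation*}
Injectivity of $b$ then falls out of the case $f\equiv 1$, the function $h$ is simply $b^{-1}$, and the bound in (2) is read off from the fact that $\frac{z-a}{h(z)(1-\bar{a}z)}$ is a contractive multiplier of $H^2$. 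Your version instead reduces everything to the existence of a Schur function $\Psi$ with $B=z\Psi(B)$ and proves necessity by the lurking-isometry and transfer-function argument; this is heavier but standard machinery, and your sufficiency computation (the CNP deficit factors as the Schur product of $\bar{w}z$ with the pulled-back de Branges--Rovnyak kernel of $\Psi$) is actually more explicit than the paper's one-line ``the if part follows by working backwards.'' Two details you should write out: (a) translating $\Psi$ back into an $h$ satisfying (1) requires knowing $B$ is injective, which does follow from $B=z\Psi(B)$ (a repeated zero of $B$ would force $\Psi(0)=0$ and hence $B\equiv0$); and (b) your hurdle (ii) is a non-issue, since the lurking isometry only needs the pointwise Gram identity, and the resulting transfer function $\Psi(\zeta)=a+\zeta C(I-\zeta E)^{-1}D$ is automatically holomorphic and in $H^\infty_1$.
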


\begin{proof}
Let $k^b(z,w)$ be the reproducing kernel of $\cH(b)$ given in \eqref{kb}.
By definition, $\cH(b)$ has complete Nevanlinna-Pick property if and only if
\beq\label{m0}
1-\frac{k^b(z,0)k^b(0,w)}{k^b(0,0)k^b(z,w)}\succeq 0.
\eeq
Let $b(0)=a\, (|a|<1)$ and let $$F(z)=k^b(z,0)=1-\ba b(z).$$ Then \eqref{m0} simplifies to
$$
F(0)-\frac{\ol{F(w)}F(z)(1-\bw z)}{1-\ol{b(w)}b(z)}\succeq 0.
$$
Expanding and rearranging, we see that it is equivalent to
\beq\label{m1}
\frac{\ol{F(w)}{F(z)}}{1-\ol{b(w)}b(z)}\preceq \frac{\ol{wF(w)}zF(z)}{1-\ol{b(w)}b(z)}+F(0).
\eeq
Note that both sides of \eqref{m1} are positive kernels. Let
$$K_1(z,w)=\frac{\ol{F(w)}{F(z)}}{1-\ol{b(w)}b(z)}=\frac{(\ol{1-\ba b(w)}){(1-\ba b(z))}}{1-\ol{b(w)}b(z)},$$
$$K_2(z,w)=\frac{\ol{wF(w)}zF(z)}{1-\ol{b(w)}b(z)}=\bw z\frac{(\ol{1-\ba b(w)}){(1-\ba b(z))}}{1-\ol{b(w)}b(z)},$$
and $$K_0(z,w)=F(0)=1-|a|^2.$$
By Theorem \ref{in} and \eqref{m1}, $\cH(b)$ has complete Nevanlinna-Pick property if and only if $\cH(K_1)$ is contractively contained in $\cH(K_2+K_0)$.

Next we identify the corresponding reproducing kernel Hilbert spaces.
Clearly, $\cH(K_0)$ consists of constant functions $f_c=c$, and $$||f_c||_{\cH(K_0)}=\frac{|c|}{\sqrt{1-|a|^2}}.$$
Notice that the range space $T_{1-\ba z}H^2$ has reproducing kernel
$$T_{1-\ba z}T_{1-\ba z}^*k_w^S=\frac{(\ol{1-\ba w}){(1-\ba z)}}{1-\bw z}.$$
By Theorem \ref{comp}, $$\cH(K_1)=b^*(T_{1-\ba z}H^2)=\{(1-\ba b(z))f(b(z)): f\in H^2\}.$$
For $|a|<1$, $T_{1-\ba z}$ is injective. Thus by Remark \ref{A}, we have
$$
||(1-\ba b(z))f(b(z))||_{\cH(K_1)}=||(1-\ba z)f(z)||_{T_{1-\ba z}H^2}=||f||_{H^2}.
$$
On the other hand, $\cH(K_2)$ is the range space
$$T_z\cH(K_1)=\{z(1-\ba b(z))g(b(z)): g\in H^2    \},$$
with norm (here $T_z$ is also injective)
$$
||z(1-\ba b(z))g(b(z))||_{\cH(K_2)}=||(1-\ba b(z))g(b(z))||_{\cH(K_1)}=||g||_{H^2}.
$$
It is also clear that $\cH(K_2)\cap \cH(K_0)=\{0\}$, thus the sum on the righthand side of \eqref{m1} is direct.
Therefore, by Theorem \ref{in}, \ref{+}, equation \eqref{m1} is equivalent to the following statement:
For every $f\in H^2$, there exists $g\in H^2$ such that
\beq\label{m2}
(1-\ba b(z))f(b(z))=z(1-\ba b(z))g(b(z))+f(a)(1-|a|^2),
\eeq
and
\beq\label{m3}
||g||_{H^2}^2+(1-|a|^2)|f(a)|^2\leq ||f||_{H^2}^2.
\eeq
In particular, taking $f=1$, there exists $g_0\in H^2$ such that
\beq\label{m4}
1-\ba b(z)=z(1-\ba b(z))g_0(b(z))+(1-|a|^2).
\eeq

We claim that $b$ has to be injective. Indeed, if $b(z_1)=b(z_2)$ for some pair $z_1\neq z_2$, then the condition \eqref{m4} forces $g_0$ to vanish at $b(z_1)$.
But if $b(z_1) = b(z_2)$, then for each $\tiz_1$ near to $z_1$, there exists $\tiz_2$ near to $z_2$ such that $b(\tiz_1)= b(\tiz_2)$, and so $g_0$ has to vanish at all points near to $b(z_1)$. This forces $g_0\equiv 0$, which in turn implies $b$ is a constant.

Since $b$ is injective, define $h=b^{-1}: b(\DD)\to\DD$, which means $h(b(z))=z$ for all $z\in \DD$. Then \eqref{m2} implies
\beq\label{m5}
g(z)=\frac{1}{h(z)}\Big(f(z)-\frac{f(a)(1-|a|^2)}{1-\ba z} \Big),
\eeq
for all $z\in b(\DD)$. Thus the right hand side of \eqref{m5} extends to a holomorphic function on $\DD$ for all $f\in H^2$.
Notice that
$$
f(z)-\frac{f(a)(1-|a|^2)}{1-\ba z}=f(z)-f(a)(1-|a|^2)k^S_a(z)
$$
is precisely the projection of $f$ onto the subspace $$\{k^S_a\}^\perp=\{f\in H^2: f(a)=0\}.$$
Thus we see that the function $$\frac{z-a}{h(z)}$$ extends to a holomorphic function on $\DD$.

Now \eqref{m3} implies
\beq\label{m6}
\Big|\Big|\frac{1}{h(z)}\Big(f(z)-\frac{f(a)(1-|a|^2)}{1-\ba z} \Big)\Big|\Big|_{H^2}^2\leq ||f||_{H^2}^2-(1-|a|^2)|f(a)|^2,
\eeq
for every $f\in H^2$.
Also we have
\begin{align*}
&\Big|\Big|f(z)-\frac{f(a)(1-|a|^2)}{1-\ba z}\Big|\Big|_{H^2}^2\\
=&||f||_{H^2}^2+||f(a)(1-|a|^2)k^S_a ||_{H^2}^2-2\,\m{Re}\,\la f, f(a)(1-|a|^2)k^S_a \ra_{H^2}\\
=&||f||_{H^2}^2+|f(a)|^2(1-|a|^2)-2 \ol{f(a)}(1-|a|^2)f(a)\\
=&||f||_{H^2}^2-(1-|a|^2)|f(a)|^2,
\end{align*}
which is exactly the righthand side of \eqref{m6}.
Therefore, \eqref{m6} shows that
\beq\label{m7}
\Big|\Big|\frac{u}{h(z)}\Big|\Big|_{H^2}\leq ||u||_{H^2},
\eeq
for every $u\in \{k^S_a\}^\perp$. Since the functions in $\{k^S_a\}^\perp$ are those of the form
$$
\frac{z-a}{1-\ba z}\cdot u_1(z),\q u_1\in H^2,
$$
and $$\Big|\frac{z-a}{1-\ba z}\cdot u_1(z)\Big|=|u_1(z)|\q a.e. \,\,\m{on}\,\,\TT,$$
we see that
$$
\frac{z-a}{h(z)}\cdot\frac{1}{1-\ba z}
$$
is a contractive multiplier of $H^2$. Thus
$$
\Big|\Big|\frac{z-a }{h(z)}\cdot\frac{1}{1-\ba z}\Big|\Big|_\infty\leq 1,
$$
which gives
$$
\Big|\frac{z-b(0)}{h(z)}\Big|\leq |1-\ol{b(0)}z|,\q \forall z\in \DD.
$$
This proves the ``only if'' part of the theorem, and the ``if'' part follows just by working backwards.

\end{proof}

\section{Examples}
Theorem \ref{M} shows that, a necessary condition for $\cH(b)$ to have complete Nevanlinna-Pick property is that $b$ be injective on $\DD$. However, the following example shows this is not sufficient.
\begin{exm}
Let $P$ be a polygon whose closure is a subset of $\DD$ and which has a vertex $v$ at which two sides make an angle that is an irrational multiple of $\pi$. Let $b$ be a conformal map of $\DD$ onto $P$ with $b(0) = 0$. Then obviously $b$ is injective. However, there can exist no function $h$ satisfying condition (2) in Theorem \ref{M} because $h$ would have to be holomorphic at $v$, and the irrationality condition on the angle of $P$ at $v$ renders this impossible.
\end{exm}

We have some explicit examples of nonextreme $\cH(b)$ spaces with complete Nevanlinna-Pick property.
\begin{exm}
Let $b\in H^\infty_1$. If $b$ has the form $$\frac{z+A}{B}, (B\neq 0)\q \m{or}\q\frac{Az}{z+B}, (|A|\geq 1, B\neq 0),$$ where $A, B$ are constants, then $\cH(b)$ has complete Nevanlinna-Pick property.
\end{exm}
\begin{proof} We verify the conditions given by Theorem \ref{M} in both cases.
\begin{enumerate}
\item Let $$b(z)=\frac{z+A}{B}.$$ Then $b(0)=\frac{A}{B}$, and $||b||_\infty\leq 1$ implies that $|A|+1\leq |B|$. Taking $h(z)=Bz-A$, then $h(b(z))=z$. For $z\in \DD$, $$\Big|\frac{z-b(0)}{1-\ol{b(0)}z}\Big|=\frac{|Bz-A|}{|B\bz-A|}\leq \frac{|Bz-A|}{|B|-|A|}\leq |h(z)|.$$
\item Let $$b(z)=\frac{Az}{z+B}.$$ Then $b(0)=0$, and $||b||_\infty\leq 1$ implies that $|A|\leq |B|-1$. Taking $h(z)=\frac{Bz}{A-z}$, then $h$ is holomorphic on $\DD$ with $h(b(z))=z$. For $z\in \DD$, $$|h(z)|=\frac{|B|}{|A-z|}|z|\geq \frac{|B|}{|A|+1}|z|\geq |z|.$$
\end{enumerate}
\end{proof}

Suppose $b$ is an inner function and the model space $\cH(b)$ has complete Nevanlinna -Pick property. Then Theorem \ref{M} shows that $b$ is injective, and thus it has to be a disk automorphism. We can take $h=b^{-1}$ and it satisfies \eqref{cM}. Therefore we proved
\begin{thm}
The only model spaces with complete Nevanlinna-Pick property are one dimensional model spaces.
\end{thm}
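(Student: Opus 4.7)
The plan is to deduce the theorem as an immediate consequence of Theorem \ref{M} combined with a classical rigidity property of inner functions. Suppose $b$ is a nonconstant inner function and that $\cH(b)$ has the complete Nevanlinna--Pick property; Theorem \ref{M} then furnishes a holomorphic $h$ with $h(b(z))=z$ for every $z\in\DD$, which in particular forces $b$ to be injective on $\DD$---precisely the consequence extracted during the proof of Theorem \ref{M}.

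The key step is now the classical fact that an injective inner function must be a disk automorphism. I would argue this via Frostman's theorem: for any nonconstant inner $b$ and every $a\in\DD$ outside a set of logarithmic capacity zero, the Frostman shift $b_a:=(a-b)/(1-\bar a b)$ is a Blaschke product. Injectivity of $b$ forces $b_a$ to have at most one zero in $\DD$. Since $b(\DD)$ is open and nonempty, one can choose some $a\in b(\DD)$ outside the Frostman exceptional set; the resulting $b_a$ is a Blaschke product with exactly one simple zero, hence a single Blaschke factor and therefore a disk automorphism. But $b=(a-b_a)/(1-\bar a b_a)$ is then a composition of two disk automorphisms, so $b$ itself is a disk automorphism.

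Once $b$ is known to be a disk automorphism, it has a single simple zero $\alpha\in\DD$, and $bH^2$ is the codimension-one subspace of $H^2$ consisting of functions vanishing at $\alpha$. Consequently $\cH(b)=H^2\ominus bH^2$ is one-dimensional, spanned by the Szeg\H{o} kernel $k^S_\alpha$. As a sanity check, $h=b^{-1}$ is itself a disk automorphism, hence holomorphic on $\DD$, and condition (2) of Theorem \ref{M} reduces to a routine computation; the converse implication ``one-dimensional RKHS $\Rightarrow$ CNP'' is trivial from the definition.

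The main obstacle is the step ``injective inner $\Rightarrow$ disk automorphism''. It is classical but not entirely elementary; Frostman's theorem is the cleanest route, although a more geometric argument via harmonic measure of the internal boundary of $b(\DD)$ is also available.
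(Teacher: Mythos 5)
Your argument is correct and follows the same route as the paper: apply Theorem \ref{M} to get injectivity of $b$, conclude that an injective inner function is a disk automorphism, and hence that $H^2\ominus bH^2$ is one-dimensional. The only difference is that the paper simply asserts the classical step ``injective inner $\Rightarrow$ disk automorphism'' without proof, whereas you supply a valid justification via Frostman's theorem.
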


However, we can show that there are lots of de Branges-Rovnyak spaces (including extreme cases) with complete Nevanlinna-Pick property.
\begin{thm}\label{c1}
Let $\GO$ be a simply connected domain such that $\DD\subset\GO\neq\CC$. Let $\phi: \GO\to \DD$ be the Riemann map, and let $b=\phi|_{\DD}$. Then $\cH(b)$ has complete Nevanlinna-Pick property.
\end{thm}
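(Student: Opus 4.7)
The plan is to verify the two conditions of Theorem \ref{M} with the natural choice $h=\phi^{-1}\colon\DD\to\GO$. Since $\phi\colon\GO\to\DD$ is a conformal bijection, $h$ is holomorphic on $\DD$, and condition (1) is immediate: $h(b(z))=\phi^{-1}(\phi(z))=z$ for every $z\in\DD$. Writing $a=b(0)=\phi(0)\in\DD$, one has $h(a)=0$, and this zero is simple because $\phi$ is conformal. Since $h$ is injective on $\DD$, it has no other zero, so $(z-a)/h(z)$ already extends holomorphically to all of $\DD$.

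For condition (2), the required inequality is equivalent to $|\psi(z)/h(z)|\le 1$ on $\DD$, where $\psi(z)=(z-a)/(1-\ba z)$ is the disk automorphism sending $a$ to $0$. The key idea is to transfer this question to the larger domain $\GO$: consider $G:=\psi\circ\phi\colon\GO\to\DD$, which is holomorphic and satisfies $G(0)=\psi(a)=0$. Applying Schwarz's lemma to the restriction $G|_{\DD}$ yields $|G(\zeta)|\le|\zeta|$ for $\zeta\in\DD$, while for $\zeta\in\GO$ with $|\zeta|\ge 1$ the bound is automatic from $|G(\zeta)|<1$. Hence $|G(\zeta)|\le|\zeta|$ holds throughout $\GO$.

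To finish, for each $z\in\DD$ I set $\zeta=h(z)\in\GO$, so that $\phi(\zeta)=z$ and
$$\frac{\psi(z)}{h(z)}=\frac{\psi(\phi(\zeta))}{\zeta}=\frac{G(\zeta)}{\zeta},$$
whose modulus is at most $1$ by the bound just obtained. This yields condition (2) of Theorem \ref{M} and completes the argument. I do not foresee a serious obstacle; the only step requiring mild care is the promotion of Schwarz's lemma from $\DD$ to all of $\GO$, which splits cleanly according to whether $|\zeta|<1$ or $|\zeta|\ge 1$. The removable singularity at $\zeta=0$ (equivalently $z=a$) causes no issue, since numerator and denominator vanish to the same order there.
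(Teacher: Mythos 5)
Your proof is correct and follows essentially the same route as the paper: both take $h=\phi^{-1}$ and reduce condition (2) of Theorem \ref{M} to the inequality $\left|\frac{\phi(\zeta)-\phi(0)}{1-\ol{\phi(0)}\phi(\zeta)}\right|\le|\zeta|$ for all $\zeta\in\GO$. The only difference is cosmetic: the paper obtains this bound by applying the maximum modulus principle to $\frac{\phi(\zeta)-\phi(0)}{\zeta(1-\ol{\phi(0)}\phi(\zeta))}$ on $\GO$, whereas you split into $|\zeta|<1$ (Schwarz's lemma on $\DD\subset\GO$) and $|\zeta|\ge1$ (trivial), which is, if anything, slightly cleaner when $\GO$ is unbounded.
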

\begin{proof}
Define $h: \DD\to \GO$ by $h=\phi^{-1}$. Clearly $h(b(z))=z$ for all $z\in \DD$. Also, by the maximum modulus principle applied to
$$
\frac{\phi(z)-\phi(0)}{z(1-\ol{\phi(0)}\phi(z))},
$$
we have
$$
\Big|\frac{\phi(z)-\phi(0)}{1-\ol{\phi(0)}\phi(z)}\Big|\leq |z|,\q\m{for all}\,z\in\GO,
$$
which translate to
$$
|h(z)|\geq \Big|\frac{z-b(0)}{1-\ol{b(0)}z}\Big|,\q \m{for all}\, z\in \DD.
$$
The result now follows from Theorem \ref{M}.
\end{proof}

\begin{rem}
Taking $\GO$ to be a domain that contains an arc of the unit circle as part of its boundary, the corresponding function $b$ will have (radial limits with) modulus $1$ on an arc, and so it will certainly be an extremal function. This shows that extreme functions can give $\cH(b)$ spaces with complete Nevanlinna-Pick property.
\end{rem}

The condition in Theorem \ref{c1} is sufficient for $\cH(b)$ to have complete Nevanlinna-Pick property, but it is not necessary.
\begin{exm}
Let $D= \{x + iy : |x| < 1, |y| < 3\pi/2 \}$, let $\psi$ be a conformal map of $\DD$ onto $D$ with $\psi(0)=0$, and let $$h(z)= Ce^{\psi(z) - 1},$$ where $C$ is a constant. Note that $h$ is not injective. However, if $C$ is chosen large enough, then $|h(z)| \geq |z|$ for all $z\in \DD$, and it maps a neighborhood $N$ of $0$ conformally onto $\DD$. Thus, if $b = (h|_N )^{-1}$, then by Theorem \ref{M}, $\cH(b)$ has complete Nevanlinna-Pick property. On the other hand, $b$ does not arise in the manner predicated by Theorem \ref{c1} because if $\phi$ existed, then $\phi$ would have to equal $h$, and $h$ is not injective on $D$.
\end{exm}

\section*{Acknowledgments}
I am grateful to Thomas Ransford for valuable discussions and for suggesting Theorem \ref{c1} and some of the examples.

\bibliography{references}
\end{document}